\definecolor{blue}{rgb}{0,0,1}
\definecolor{red}{rgb}{1,0,0}
\definecolor{green}{rgb}{0,1,0}
\title{On an Additive prime divisor Function\\ of Alladi and Erd\H os}
\author{Dorian Goldfeld}
\address{Department of Mathematics\\
  Columbia University\\
  New York, NY 10027}
\email{goldfeld@columbia.edu}
\newtheorem{theorem}{Theorem}[section]
\newtheorem{corollary}{Corollary}[theorem]
\newtheorem{lemma}[theorem]{Lemma}
\begin{document}

\maketitle
\dedicatory\centerline{\it This paper is dedicated to Krishna Alladi on the occasion of his $60^{th}$ birthday.}
\let\thefootnote\relax\footnote{This research was partially supported by NSA grant
     H98230-15-1-0035.}
\begin{abstract}
This paper discusses the additive prime divisor function $A(n) := \sum\limits_{p^\alpha || n} \alpha \, p$ which was introduced by Alladi and Erd\H os in 1977. It is shown that $A(n)$ is uniformly distributed (mod $q$) for any fixed integer $q > 1$ with an explicit bound for the error.
\end{abstract}

 \section{Introduction}
 
Let $n = \prod\limits_{i=1}^r p_i^{a_i}$  be the unique prime decomposition of a positive integer $n$. In 1977, Alladi and Erd\H os \cite{AE} introduced the additive function $$A(n) := \sum_{i=1}^r a_i \cdot p_i.$$ Among several other things they proved that $A(n)$ is uniformly distributed modulo 2. This was obtained from the identity
\begin{equation} \label{mod2dist}
\sum_{n=1}^\infty \frac{(-1)^{A(n)}}{n^s} \; = \; \frac{2^s+1}{2^s-1} \cdot \frac{\zeta(2s)}{\zeta(s)}
\end{equation}
together with the known zero free region for the Riemann zeta function.
As a consequence they proved that there exists a constant $c > 0$ such that
$$
\sum_{n\le x} (-1)^{A(n)} = \mathcal O\left( x \, e^{-c \sqrt{\log x \log\log x}}  \right),
$$
for $x \to \infty.$
\vskip 4pt
  In 1969 Delange \cite{Del} gave a necessary and sufficient condition for uniform distribution in progressions for integral valued additive functions  which easily implies that  $A(n)$ is uniformly distributed (mod $q$) for all $q \ge 2$ (although without a bound  for the error in the asymptotic formula).
  The main goal of this paper is to show  that $A(n)$ is uniformly distributed modulo $q$ for any integer $q \ge 2$ with an explicit bound for the error.
 \vskip 4pt  
  Unfortunately, it is not possible to obtain such a simple identity as in (\ref{mod2dist}) for the Dirichlet series
 $$\sum_{n=1}^\infty\; \frac{e^{2\pi i \frac{h A(n)}{q}}}{ n^{s}}$$
 when $q > 2$ and $h,q$ are coprime. Instead we require a representation involving a product of rational powers of Dirichlet L-functions which will have branch points at the zeros of the L-functions.
 
 The uniform distribution of $A(n)$  is a consequence of the following theorem (\ref{MainTheorem})  which is proved in \S 3.
 To state the theorem we require some standard notation. Let $\mu$ denote the Mobius function and let $\phi$ denote Euler's function. For any Dirichlet character $\chi\hskip -3pt\pmod{q}$ (with $q > 1$) let $\tau(\chi) = \sum\limits_{\ell\hskip-5pt\pmod{q}} \chi(\ell) e^{\frac{2\pi i\ell}{q}}$
  denote the associated Gauss sum and let $L(s, \chi)$ denote the Dirichlet L-function associated to $\chi.$ 
 \vskip 10pt
 
 \begin{theorem} \label{MainTheorem} 
 Let $h,q$ be fixed coprime integers with $q > 2.$ Then for $x \to \infty$ we have the asymptotic formula
 $$\sum_{n\le x} e^{2\pi i \frac{h A(n)}{q}} \; = \; \begin{cases}
 C_{h,q} \cdot x \,  (\log x)^{-1 + \frac{\mu(q)}{\phi(q)}} \Big(1 + \mathcal O\left( (\log x)^{-1}\right)\Big)  & \text{if} \; \mu(q) \ne 0,\\
 &\\
  \mathcal O\left( x\,  e^{-c_0\,\sqrt{\log x}}  \right)& \text{if} \; \mu(q) = 0,
 \end{cases}$$
 where $c_0>0$ is a constant depending at most on $h, q$,
  $$C_{h,q} =  \frac{V_{h,q}\cdot \sin\left(\frac{\mu(q) \,\pi }{\phi(q)}   \right)}{\pi}\, \Gamma\left(1 - \frac{\mu(q)}{\phi(q)}  \right) \underset{\chi\ne\chi_0}{\prod_{\chi\hskip-5pt\pmod{q}}} \;L(1, \,\chi)^{\frac{\tau(\overline{\chi})\chi(h)}{\phi(q)}},$$
 and 
 $$V_{h,q} :=  \exp\left[ - \frac{\mu(q)}{\phi(q)}\sum_{p\mid q}\sum_{k=1}^\infty \frac{1}{kp^{k}} \; + \; \sum_{p\,\mid \,q} \sum_{k=1}^\infty \frac{e^{\frac{2\pi i h p^k}{q}}}{k \, p^{k}}  \; + \; \sum_p \sum_{k=2}^\infty \frac{e^{\frac{2\pi i p h k}{q}} - e^{\frac{2\pi i p^k h}{q}}  }{k \, p^{k}}\right].$$

  \end{theorem}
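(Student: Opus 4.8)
The plan is to study the Dirichlet series $F_{h,q}(s) := \sum_{n\ge 1} e^{2\pi i h A(n)/q}\, n^{-s}$ via its Euler product, extract the dominant analytic behavior near $s=1$, and then invoke a Selberg–Delange / Tauberian argument to transfer this to an asymptotic for the partial sums. Since $A$ is completely additive on prime powers in the sense that $A(p^k)=kp$, the function $n\mapsto e^{2\pi i h A(n)/q}$ is multiplicative, so
\[
F_{h,q}(s) \;=\; \prod_p\left(1 + \sum_{k=1}^\infty \frac{e^{2\pi i h k p/q}}{p^{ks}}\right).
\]
First I would isolate, inside each local factor, the ``main term'' coming from $k=1$, writing $1+\sum_{k\ge1} e^{2\pi i hkp/q}p^{-ks} = \bigl(1-e^{2\pi i hp/q}p^{-s}\bigr)^{-1}\cdot G_p(s)$ where $G_p(s)=1+O(p^{-2\sigma})$ is analytic and nonvanishing for $\sigma>1/2$. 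The product over the $k=1$ terms is $\prod_p \bigl(1-e^{2\pi i hp/q}p^{-s}\bigr)^{-1}$, and the point is to express this in terms of Dirichlet $L$-functions: using orthogonality of characters mod $q$, one has $e^{2\pi i h p/q}=\frac{1}{\phi(q)}\sum_{\chi\bmod q}\tau(\overline\chi)\chi(h)\chi(p)$ for $p\nmid q$ (this is where the Gauss sums $\tau(\overline\chi)$ enter), so formally
\[
\prod_{p\nmid q}\bigl(1-e^{2\pi i hp/q}p^{-s}\bigr)^{-1} \;=\; \prod_{\chi\bmod q} L(s,\chi)^{\tau(\overline\chi)\chi(h)/\phi(q)}
\]
up to a factor that is holomorphic and nonzero for $\sigma>1/2$ (absorbing the difference between $\log(1-ap^{-s})^{-1}=\sum_k a^k/(kp^{ks})$ and $\frac1{\phi(q)}\sum_\chi\cdots$, which accounts for the $k\ge2$ correction sum in $V_{h,q}$, together with the $p\mid q$ factors and the defect at $\chi=\chi_0$).

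The principal character contributes $L(s,\chi_0)^{1/\phi(q)}$, and since $L(s,\chi_0)=\zeta(s)\prod_{p\mid q}(1-p^{-s})$, the singular part near $s=1$ is $\zeta(s)^{1/\phi(q)}$; combining with the contribution $\tau(\overline\chi)\chi(h)=-1$ aggregated over... more carefully, I'd track that the exponent of $\zeta(s)$ in the full representation comes out to $1/\phi(q)$ times the relevant combination and, after accounting for the fact that $e^{2\pi i h p/q}$ averaged over residues is $\mu(q)/\phi(q)$ (the Ramanujan-sum identity $c_q(h)=\mu(q)$ for $(h,q)=1$), the effective order of the pole/branch point at $s=1$ is $\mu(q)/\phi(q)$. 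So I would arrive at
\[
F_{h,q}(s) \;=\; \zeta(s)^{\mu(q)/\phi(q)}\; H_{h,q}(s),
\]
where $H_{h,q}(s)=\prod_{\chi\ne\chi_0}L(s,\chi)^{\tau(\overline\chi)\chi(h)/\phi(q)}$ times the explicit Euler product giving $V_{h,q}$, and $H_{h,q}$ extends holomorphically and nonvanishingly to a zero-free-region neighborhood of $\Re s\ge 1$ (here I'd need that the nonprincipal $L(s,\chi)$ are nonzero on $\Re s=1$ and in a Vinogradov-type region, so that the rational powers are well-defined via a branch of the logarithm). Evaluating $H_{h,q}(1)=V_{h,q}\prod_{\chi\ne\chi_0}L(1,\chi)^{\tau(\overline\chi)\chi(h)/\phi(q)}$ gives the constant appearing in $C_{h,q}$.

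The final step is a contour-integration / Selberg–Delange extraction: by Perron's formula,
\[
\sum_{n\le x} e^{2\pi i h A(n)/q} \;=\; \frac{1}{2\pi i}\int_{(c)} F_{h,q}(s)\,\frac{x^s}{s}\,ds,
\]
and I would deform the contour to a Hankel-type path wrapping the branch cut at $s=1$ (valid inside the zero-free region, where $F_{h,q}$ is of at most polynomial growth), so that when $\mu(q)\ne0$ the main contribution is the standard Hankel integral $\frac{1}{2\pi i}\int (s-1)^{-\mu(q)/\phi(q)}\frac{x^s}{s}ds = \frac{x(\log x)^{-1+\mu(q)/\phi(q)}}{\Gamma(\mu(q)/\phi(q))}\bigl(1+O(1/\log x)\bigr)$ — using $\Gamma(z)\Gamma(1-z)=\pi/\sin(\pi z)$ to rewrite $1/\Gamma(\mu(q)/\phi(q))$ as $\sin(\pi\mu(q)/\phi(q))\Gamma(1-\mu(q)/\phi(q))/\pi$, which is exactly the shape of $C_{h,q}$ — while the remaining contour pieces give the $O(x(\log x)^{-2+\mu(q)/\phi(q)})$ error, i.e. the claimed $O((\log x)^{-1})$ relative error. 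When $\mu(q)=0$ (i.e. $q$ not squarefree), the exponent $\mu(q)/\phi(q)=0$, so $F_{h,q}$ has no singularity at $s=1$ at all — it is holomorphic and bounded on the zero-free region — and shifting the contour to $\Re s = 1 - c/\sqrt{\log x}$ (the Vinogradov zero-free region for the $L$-functions, after handling the $L(s,\chi_0)^{1/\phi(q)}=\zeta(s)^{1/\phi(q)}\cdot(\text{holo})$ factor — note $\zeta$ still has a pole, but the net exponent being $0$ cancels it, though one must be careful since individual factors are singular) yields the bound $O(x\,e^{-c_0\sqrt{\log x}})$. The main obstacle I anticipate is the bookkeeping around $s=1$ for the factor coming from $\chi_0$: $L(s,\chi_0)$ genuinely has a pole while other manipulations introduce singularities, and one must verify that the fractional powers combine to leave only a $(s-1)^{\mu(q)/\phi(q)}$-type branch point with a holomorphic nonvanishing cofactor — in particular that when $\mu(q)=0$ there is no residual singularity — and secondarily, justifying that the Euler product defining $V_{h,q}$ (the $k\ge 2$ correction and the $p\mid q$ terms) converges and is holomorphic for $\Re s>1/2$, so that the branch of the product of $L$-function powers is unambiguous throughout the region where the contour is pushed.
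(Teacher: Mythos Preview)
Your approach is essentially the same as the paper's: factor the Dirichlet series as $\zeta(s)^{\mu(q)/\phi(q)}$ times a product of nonprincipal $L(s,\chi)$ to complex powers times a correction holomorphic for $\Re s>1/2$ (the paper's Theorem~2.3, with the Ramanujan-sum identity $c_q(h)=\mu(q)$ supplying the exponent exactly as you say), then apply Perron and deform around the branch point at $s=1$, computing the Hankel/slit contribution when $\mu(q)\ne0$ and getting exponential saving when $\mu(q)=0$. Two small remarks: since $\psi_{h/q}$ is \emph{completely} multiplicative the local factor is exactly $(1-e^{2\pi i hp/q}p^{-s})^{-1}$, so your $G_p(s)\equiv 1$ and the $k\ge2$ correction in $V_{h,q}$ arises only from comparing $\sum_k e^{2\pi i hkp/q}/(kp^{ks})$ with $\sum_k e^{2\pi i hp^k/q}/(kp^{ks})$ after taking logarithms; and your worry about ``individual factors being singular'' when $\mu(q)=0$ evaporates in the paper's formulation, since there the factor is literally $\zeta(s)^{0}=1$ rather than a ratio of singular pieces.
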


Theorem \ref{MainTheorem} has the following easily proved corollary.

\begin{corollary} \label{corollary}
Let $q >1$ and let $h$ be an arbitrary integer. Then
$$\sum_{n\le x} e^{2\pi i \frac{h A(n)}{q}} = \mathcal O\left( \frac{x}{\sqrt{\log x}} \right).$$
\end{corollary}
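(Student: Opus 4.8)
The plan is to derive the corollary from Theorem \ref{MainTheorem} by a short case analysis, after first reducing to coprime arguments. Since $e^{2\pi i hA(n)/q}$ depends only on $hA(n)\bmod q$, I would set $d=\gcd(h,q)$, $q_1=q/d$, $h_1=h/d$, so that $\gcd(h_1,q_1)=1$ and $\sum_{n\le x}e^{2\pi i hA(n)/q}=\sum_{n\le x}e^{2\pi i h_1A(n)/q_1}$. If $q\mid h$ (i.e.\ $q_1=1$) then $e^{2\pi i hA(n)/q}=1$ and the sum equals $\lfloor x\rfloor$, so the assertion is to be read with $q\nmid h$ understood; I therefore assume $q_1\ge 2$, and it suffices to prove the bound under the hypotheses $\gcd(h,q)=1$ and $q\ge 2$.

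First I would dispose of $q=2$, which Theorem \ref{MainTheorem} does not cover. Here $h$ is odd, so $e^{2\pi i hA(n)/2}=(-1)^{A(n)}$, and the estimate $\sum_{n\le x}(-1)^{A(n)}=\mathcal O\!\left(x\,e^{-c\sqrt{\log x\,\log\log x}}\right)$ recalled in the introduction --- itself a consequence of the identity (\ref{mod2dist}) --- is far stronger than $\mathcal O(x/\sqrt{\log x})$.

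For $q>2$ I would apply Theorem \ref{MainTheorem} with the coprime pair $(h,q)$. If $\mu(q)=0$ the theorem gives $\mathcal O\!\left(x\,e^{-c_0\sqrt{\log x}}\right)$, which is $\mathcal O(x/\sqrt{\log x})$. If $\mu(q)\ne 0$ then $|\mu(q)|=1$ and, since $q>2$ forces $\phi(q)\ge 2$ (as $\phi(q)=1$ only for $q\in\{1,2\}$), the exponent obeys $-1+\mu(q)/\phi(q)\le -1+\tfrac12=-\tfrac12$; hence the main term $C_{h,q}\,x\,(\log x)^{-1+\mu(q)/\phi(q)}\bigl(1+\mathcal O((\log x)^{-1})\bigr)$ is $\mathcal O\!\left(x\,(\log x)^{-1/2}\right)$, the implied constant absorbing the fixed number $C_{h,q}$. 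Combining the three cases yields the stated bound.

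I do not expect any genuine obstacle here: the corollary is a formal consequence of the theorem. The only steps needing a word are the reduction to coprime $h,q$, the separate treatment of $q=2$ via the Alladi--Erd\H os identity (since the theorem assumes $q>2$), and the elementary remark that $\phi(q)\ge 2$ for $q\ge 3$, which is precisely what keeps the power of $\log x$ at $-\tfrac12$ or below in the non-squarefree-complement case.
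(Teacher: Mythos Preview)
Your argument is correct and is precisely the routine deduction from Theorem~\ref{MainTheorem} (supplemented by the $q=2$ estimate coming from~(\ref{mod2dist})) that the paper leaves implicit with the phrase ``easily proved corollary.'' You are also right to flag that the statement tacitly requires $q\nmid h$, since for $q\mid h$ the sum equals $\lfloor x\rfloor$; this is harmless for the intended application to Theorem~\ref{thm2}, where only the nontrivial additive characters enter.
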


 The above corollary can then be used to obtain the desired uniform distribution theorem.
 
 \begin{theorem} \label{thm2}
 Let $h,q$ be fixed integers with $q > 2.$ Then for $x\to\infty$, we have
 $$\underset {A(n)\; \equiv\; h\hskip-5pt\pmod{q}} {\sum_{n \,\le \, x}} \hskip -10pt 1 \; = \;\frac{x}{q} \; + \; \mathcal O\left( \frac{x}{\sqrt{\log x}} \right).$$
 \end{theorem}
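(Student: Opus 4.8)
The plan is to detect the congruence $A(n)\equiv h\pmod q$ by the orthogonality of additive characters modulo $q$ and then to appeal to Corollary~\ref{corollary} one term at a time. Recall that for an integer $m$,
$$\frac1q\sum_{j=0}^{q-1} e^{2\pi i\frac{jm}{q}}=\begin{cases}1&\text{if }q\mid m,\\ 0&\text{otherwise.}\end{cases}$$
Applying this with $m=A(n)-h$, summing over $n\le x$, and interchanging the two finite sums gives
$$\underset {A(n)\; \equiv\; h\hskip-5pt\pmod{q}} {\sum_{n \,\le \, x}} \hskip -10pt 1 \;=\; \frac1q\sum_{j=0}^{q-1} e^{-2\pi i\frac{jh}{q}}\sum_{n\le x} e^{2\pi i\frac{jA(n)}{q}}.$$

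First I would separate off the term $j=0$, which contributes $\tfrac1q\lfloor x\rfloor=\tfrac{x}{q}+\mathcal O(1)$, the asserted main term. For each remaining index $j\in\{1,\dots,q-1\}$ I would invoke Corollary~\ref{corollary}, which (being stated for an arbitrary integer in the role of $h$ and for every modulus $>1$) yields $\sum_{n\le x} e^{2\pi i jA(n)/q}=\mathcal O\left(x/\sqrt{\log x}\right)$. Since $q$ is fixed there are only $q-1$ such terms, and each coefficient $e^{-2\pi i jh/q}$ has absolute value $1$, so the total contribution of the terms with $j\ne 0$ is $\mathcal O\left(x/\sqrt{\log x}\right)$. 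Combining this with the main term from $j=0$ proves the theorem.

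The only point that requires a moment's attention is that Corollary~\ref{corollary} must be applicable for \emph{every} residue $j\bmod q$, including those with $d:=\gcd(j,q)>1$; this is precisely why the corollary was phrased in that generality. Writing $q'=q/d$ and $j'=j/d$ one has $e^{2\pi i jA(n)/q}=e^{2\pi i j'A(n)/q'}$ with $q'>1$ (because $q\nmid j$ when $1\le j\le q-1$) and $\gcd(j',q')=1$, so the bound applies; in the boundary case $q'=2$ it even reduces to the sharper Alladi--Erd\H os estimate $\mathcal O\left(xe^{-c\sqrt{\log x\log\log x}}\right)$, which is of course $\mathcal O\left(x/\sqrt{\log x}\right)$. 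Apart from this bookkeeping there is no real obstacle: all of the analytic difficulty --- the representation of the twisted Dirichlet series through rational powers of Dirichlet $L$-functions and the contour integration around their branch points --- has already been handled in Theorem~\ref{MainTheorem} and distilled into Corollary~\ref{corollary}, so Theorem~\ref{thm2} follows by this elementary character-sum manipulation.
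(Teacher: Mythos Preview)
Your proposal is correct and follows exactly the route the paper intends: the paper does not spell out a separate proof of Theorem~\ref{thm2} but states that ``the above corollary can then be used to obtain the desired uniform distribution theorem,'' and the worked example in \S4 shows precisely the orthogonality decomposition $\tfrac1q\sum_{j=0}^{q-1}e^{-2\pi ijh/q}\sum_{n\le x}e^{2\pi ijA(n)/q}$ you wrote down. Your remark reducing the terms with $\gcd(j,q)>1$ to a smaller modulus $q'=q/\gcd(j,q)>1$ (and invoking Alladi--Erd\H os when $q'=2$) is exactly the bookkeeping behind the ``arbitrary integer $h$'' clause in Corollary~\ref{corollary}, so nothing is missing.
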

 
We remark that the error term  in theorem \ref{thm2} can be replaced by a second order asymptotic term which is not uniformly distributed (mod $q$).

\vskip 5pt
The proof of theorem  (\ref{MainTheorem}) relies on explicitly constructing an L-function with coefficients of the form $e^{2\pi i \frac{h A(n)}{q}}$. It will turn out that this L-function will be a product of Dirichlet L-functions raised to complex powers. The techniques for obtaining asymptotic formulae and dealing with branch singularities arising from complex powers of ordinary L-series were first introduced by Selberg \cite{Sel}, and see also  Tenenbaum \cite{Ten}  for a very nice exposition with different applications.  
 In \cite{Ivic}, \cite{IE1}, \cite{IE2} one finds a larger class of  additive functions where these methods can also be applied yielding similar results but with different constants. \vskip 20pt

 \section{On the function $L(s, \psi_{h/q})$}

 Let $h,q$ be coprime integers integers with $q > 1$.  In this paper we shall investigate the completely multiplicative function $$\psi_{h/q}(n) := e^{\frac{2\pi i hA(n)}{q}}.$$
  
     Then the L-function associated to $\psi_{h/q}$ is defined by the absolutely convergent series
     
     \begin{equation} \label{Lfunction}
L(s, \psi_{h/q}) :=  \; \sum_{n=1}^\infty  \psi_{h/q}(n) n^{-s},
\end{equation}
in the region $\Re(s) > 1,$ and has an
 Euler product representation (product over rational primes) of the form
\begin{equation} \label{EulerProduct}
L(s, \psi_{h/q}) := \prod_p \left(1 - \frac{ e^{\frac{2\pi i h p}{q}} }{p^s}   \right)^{-1} .
\end{equation}

\vskip 8pt
 The Euler product  (\ref{EulerProduct}) converges absolutely to a non-vanishing function for $\Re(s) > 1.$ We would like to show it has analytic continuation to a larger region.

\vskip 10pt 
  \begin{lemma} \label{lemma1}  Let $\Re(s) > 1.$ Then
  $$\log\big(L(s, \psi_{h/q})\big) = \sum_p \sum_{k=1}^\infty \frac{e^{\frac{2\pi i h p^k}{q}}}{k \, p^{sk}} \; + \; T_{h,q}(s)$$
  where, for any $\epsilon > 0$, the function
 $$T_{h,q}(s) := \sum_p \sum_{k=2}^\infty \frac{e^{\frac{2\pi i p h k}{q}} - e^{\frac{2\pi i p^k h}{q}}  }{k \, p^{sk}}$$ is holomorphic for $\Re(s) > \frac12 + \epsilon$ and satisfies $|T_{h,q}(s)| = \mathcal O_\epsilon\left(1\right)$ where the $\mathcal O_\epsilon$-constant is  independent of $q$ and depends at most on $\epsilon$.

\end{lemma}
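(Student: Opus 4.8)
\noindent\emph{Proof proposal.} The plan is to begin in the region $\Re(s) > 1$, where by the remark following (\ref{EulerProduct}) the Euler product converges absolutely to a non-vanishing holomorphic function, so its principal-branch logarithm is defined and equals the sum of the logarithms of the local factors. For each prime $p$, writing $\sigma = \Re(s) > 1$, we have $\big|e^{2\pi i hp/q} p^{-s}\big| = p^{-\sigma} < 1$, hence $\log\big(1 - e^{2\pi i hp/q} p^{-s}\big)^{-1} = \sum_{k\ge 1} e^{2\pi i hpk/q}\big(k\, p^{sk}\big)^{-1}$. The resulting double series over $p$ and $k$ is absolutely convergent for $\sigma > 1$, being dominated by $\sum_p \sum_{k\ge 1} p^{-\sigma k}/k = \sum_p\big(-\log(1-p^{-\sigma})\big) < \infty$, so rearrangement is legitimate and
$$\log\big(L(s,\psi_{h/q})\big) \;=\; \sum_p\sum_{k=1}^\infty \frac{e^{\frac{2\pi i h p k}{q}}}{k\, p^{sk}}.$$

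Next I would write each numerator as $e^{2\pi i h pk/q} = e^{2\pi i h p^k/q} + \big(e^{2\pi i h pk/q} - e^{2\pi i h p^k/q}\big)$. When $k = 1$ the bracketed difference vanishes identically (since $p^1 = p$), so the contribution of the first pieces is exactly $\sum_p\sum_{k\ge 1} e^{2\pi i h p^k/q}\big(k\,p^{sk}\big)^{-1}$ and that of the second pieces is precisely $T_{h,q}(s)$ as defined in the statement, the inner sum now running over $k \ge 2$. This is the claimed identity; the splitting of one absolutely convergent series into two is justified by the absolute convergence established in the next step.

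It remains to prove the analytic properties of $T_{h,q}$. Using only the trivial bound $\big|e^{2\pi i phk/q} - e^{2\pi i p^k h/q}\big| \le 2$ — this is the step that makes the constant independent of $h$ and $q$, since neither quantity survives the estimate — we obtain, for $\sigma \ge \frac12 + \epsilon$,
$$\big|T_{h,q}(s)\big| \;\le\; 2\sum_p\sum_{k=2}^\infty \frac{1}{k\, p^{\sigma k}} \;\le\; 2\sum_p \frac{p^{-2\sigma}}{1 - p^{-\sigma}} \;\le\; \frac{2}{1 - 2^{-1/2}} \sum_p \frac{1}{p^{1 + 2\epsilon}} \;=\; \mathcal O_\epsilon(1).$$
The same bound serves as a Weierstrass $M$-test majorant on the closed half-plane $\Re(s) \ge \frac12 + \epsilon$, so the series defining $T_{h,q}$ converges uniformly there; since each summand is entire in $s$, $T_{h,q}$ is holomorphic on $\Re(s) > \frac12 + \epsilon$, and hence, letting $\epsilon \downarrow 0$, on all of $\Re(s) > \frac12$.

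The argument presents no real obstacle — it is the standard manipulation of taking the logarithm of an Euler product and separating off the $k = 1$ diagonal term where $p^k = p$. The only matters requiring attention are the rearrangements of double series, which are licensed by absolute convergence in $\Re(s) > 1$, and the verification that the implied constant is uniform in $q$: this works out because bounding the oscillatory numerator by $2$ reduces everything to $\sum_p p^{-2\sigma}$, which converges precisely for $\sigma > \frac12$ — exactly the half-plane named in the lemma.
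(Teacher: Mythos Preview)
Your proof is correct and follows exactly the paper's approach: take logarithms of the Euler product, add and subtract the $e^{2\pi i h p^k/q}$ term, and observe that the resulting correction series with $k\ge 2$ converges absolutely for $\Re(s)>\tfrac12$. The paper's proof is terser (it simply asserts the absolute convergence of $T_{h,q}$), whereas you supply the explicit $M$-test bound and the uniformity in $q$; but the method is identical.
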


\begin{proof}
  Taking log's, we obtain
 \begin{align*} \label{logL} 
 \log\big( L(s, \psi_{h/q})  \big) & = \sum_p \sum_{k=1}^\infty \frac{e^{\frac{2\pi i p h k}{q}}}{k \, p^{sk}}\\
 & =  \sum_p \sum_{k=1}^\infty \frac{e^{\frac{2\pi i h p^k}{q}}}{k \, p^{sk}} \; + \;\sum_p \sum_{k=2}^\infty \frac{e^{\frac{2\pi i p h k}{q}} - e^{\frac{2\pi i p^k h}{q}}  }{k \, p^{sk}}.  
 \end{align*}
 Hence, we may take
 $$T_{h,q}(s) = \sum_p \sum_{k=2}^\infty \frac{e^{\frac{2\pi i p h k}{q}} - e^{\frac{2\pi i p^k h}{q}}  }{k \, p^{sk}},$$ which is easily seen to converge absolutely for $\Re(s) > \frac12.$
 \end{proof}
 
 \pagebreak
 
\vskip 10pt  
  For $q > 2,$ let $\chi$ denote a Dirichlet character $\hskip-4pt \pmod{q}$ with associated Gauss sum
  $\tau(\chi).$ We also let $\chi_0$ be the trivial character $\hskip-5pt\pmod{q}.$
  \vskip 8pt
  We require the following lemma.
  
  \begin{lemma} \label{lemma2}
Let $h, q\in \mathbf Z$  with $q > 2$ and $(h,q) = 1.$ Then
$$ e^{\frac{2\pi i h}{q}} = \left(\frac{1}{\phi(q)} \underset{\chi\ne\chi_0} {\sum_{\chi\hskip-5pt\pmod{q}}}  \tau(\chi)\cdot \overline{\chi(h)} \right) \; + \; \frac{\mu(q)}{\phi(q)}$$
\end{lemma}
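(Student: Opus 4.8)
The plan is to start from the classical identity relating additive characters $e^{2\pi i h \ell/q}$ to the multiplicative characters mod $q$, and then isolate the contribution of the principal character. Recall the standard Gauss-sum inversion: for any $\ell$ with $(\ell,q)=1$ one has
$$e^{\frac{2\pi i \ell}{q}} = \frac{1}{\phi(q)} \sum_{\chi \pmod q} \tau(\chi)\, \overline{\chi(\ell)},$$
which follows by expanding $\tau(\chi) = \sum_{m} \chi(m) e^{2\pi i m/q}$, multiplying by $\overline{\chi(\ell)}$, summing over $\chi$, and using the orthogonality relation $\sum_\chi \chi(m)\overline{\chi(\ell)} = \phi(q)$ if $m\equiv \ell$ and $0$ otherwise. (Here I will need $(\ell,q)=1$ so that $\chi(\ell)$ is a genuine root of unity and the orthogonality applies cleanly; since $(h,q)=1$ this hypothesis is met with $\ell = h$.)

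Next I would peel off the $\chi = \chi_0$ term. For the principal character $\tau(\chi_0) = \sum_{m \pmod q, (m,q)=1} e^{2\pi i m/q}$ is a Ramanujan sum $c_q(1)$, and the classical evaluation of Ramanujan sums gives $c_q(1) = \mu(q)$. Also $\overline{\chi_0(h)} = 1$ because $(h,q)=1$. Hence the $\chi_0$ term contributes exactly $\mu(q)/\phi(q)$, and moving the remaining sum over $\chi \ne \chi_0$ to the other side yields precisely the claimed identity
$$e^{\frac{2\pi i h}{q}} = \left(\frac{1}{\phi(q)} \underset{\chi \ne \chi_0}{\sum_{\chi \pmod q}} \tau(\chi)\, \overline{\chi(h)}\right) + \frac{\mu(q)}{\phi(q)}.$$

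There is no serious obstacle here; the lemma is essentially a bookkeeping exercise built on two textbook facts (Gauss-sum Fourier inversion and $c_q(1) = \mu(q)$). The only point requiring a moment's care is verifying that the Ramanujan-sum evaluation $\sum_{(m,q)=1} e^{2\pi i m/q} = \mu(q)$ is being invoked with the correct normalization — this is quickest to see via multiplicativity of $c_q(1)$ in $q$ together with the prime-power case $c_{p^a}(1)$, which is $0$ for $a \ge 2$, $-1$ for $a = 1$, matching $\mu(p^a)$. I would either cite this directly or include the one-line inclusion–exclusion argument $\sum_{(m,q)=1} e^{2\pi i m/q} = \sum_{d \mid q} \mu(d) \sum_{d \mid m,\, 1\le m\le q} e^{2\pi i m/q} = \sum_{d\mid q} \mu(d)\cdot \frac{q}{d}\cdot[\,d=q\,] = \mu(q)$.
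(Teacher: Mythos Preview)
Your proof is correct and uses the same two ingredients as the paper: orthogonality of Dirichlet characters and the evaluation of the Ramanujan sum $c_q(1)=\mu(q)$. The only organizational difference is that the paper works directly with $\sum_{\chi\ne\chi_0}\tau(\chi)\overline{\chi(h)}$, expands each $\tau(\chi)$, and applies orthogonality in the form $\sum_{\chi\ne\chi_0}\chi(\ell)=\phi(q)\mathbf 1_{\ell\equiv 1}-1$, which makes the Ramanujan sum $c_q(h)$ appear; you instead establish the full inversion $e^{2\pi i h/q}=\phi(q)^{-1}\sum_{\chi}\tau(\chi)\overline{\chi(h)}$ and then peel off the $\chi_0$ term, so the Ramanujan sum shows up as $\tau(\chi_0)=c_q(1)$. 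Since $(h,q)=1$ forces $c_q(h)=c_q(1)=\mu(q)$, the two routes are equivalent and equally short.
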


\begin{proof}
Since $(h,q) = 1,$ it follows that for $\chi\hskip-3pt\pmod{q}$ with $\chi \ne \chi_0,$
$$\tau(\chi) \, \overline{\chi(h)} = \sum_{\ell=1}^q \chi(\ell) e^{\frac{2\pi i \ell h}{q}}.$$
This implies that
\begin{align*} \underset{\chi\ne\chi_0} {\sum_{\chi\hskip-5pt\pmod{q}}}  \tau(\chi)\, \overline{\chi(h)}  & = (\phi(q)-1) \,e^{\frac{2\pi i h}{q}} \; + \; \underset{(\ell,q)=1}{\sum_{\ell=2}^q }\left( \underset{\chi\ne\chi_0} {\sum_{\chi\hskip-5pt\pmod{q}}} \,\chi(\ell)\right) e^{\frac{2\pi i \ell  h}{q}}\\
& =  (\phi(q)-1) \,e^{\frac{2\pi i h}{q}} \; - \; \underset{(\ell,q)=1}{\sum_{\ell=1}^q } e^{\frac{2\pi i \ell h}{q}} \; + \; e^{\frac{2\pi i h}{q}}.
\end{align*}
The proof is completed upon noting that the Ramanujan sum on the right side above can be evaluated as
$$\underset{(\ell,q)=1}{\sum_{\ell=1}^q } e^{\frac{2\pi i \ell h}{q}} = \sum_{d\mid (q,h)} \mu\left(\frac{q}{d}\right) \,d \; = \; \mu(q).$$

\end{proof}

\vskip 5pt  
 \begin{theorem}  \label{Identity}
Let $s\in\mathbf C$ with $\Re(s)>1.$ Then we have the representation
$$L(s, \psi_{h/q}) =  \left(\underset{\chi\ne\chi_0}{\prod_{\chi\hskip-5pt\pmod{q}}} \;L(s,\overline{\chi})^{\frac{\tau(\chi)\overline{\chi(h)}}{\phi(q)}}\right) \cdot \zeta(s)^{\frac{\mu(q)}{\phi(q)}}\cdot e^{U_{h,q}(s)},$$
where
$$U_{h,q}(s) :=   - \frac{\mu(q)}{\phi(q)}\sum_{p\mid q}\sum_{k=1}^\infty \frac{1}{kp^{sk}} \; + \; \sum_{p\,\mid \,q} \sum_{k=1}^\infty \frac{e^{\frac{2\pi i h p^k}{q}}}{k \, p^{sk}}  \; + \; \sum_p \sum_{k=2}^\infty \frac{e^{\frac{2\pi i p h k}{q}} - e^{\frac{2\pi i p^k h}{q}}  }{k \, p^{sk}}$$
 \end{theorem}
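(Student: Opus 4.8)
The plan is to prove the identity in logarithmic form and then exponentiate. By Lemma~\ref{lemma1}, for $\Re(s)>1$,
$$\log L(s, \psi_{h/q}) = \sum_p \sum_{k=1}^\infty \frac{e^{\frac{2\pi i h p^k}{q}}}{k\, p^{sk}} \;+\; T_{h,q}(s),$$
and every double series appearing below converges absolutely in this region, so all rearrangements are justified. First I would split the main sum according to whether $p\mid q$ or $p\nmid q$; the range $p\mid q$ is left untouched and will supply the second term of $U_{h,q}(s)$.

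For $p\nmid q$ we have $(hp^k,q)=1$, so Lemma~\ref{lemma2}, applied with $h$ replaced by $hp^k$, gives
$$e^{\frac{2\pi i h p^k}{q}} = \frac{1}{\phi(q)}\underset{\chi\ne\chi_0}{\sum_{\chi\hskip-5pt\pmod{q}}} \tau(\chi)\,\overline{\chi(h)}\,\overline{\chi(p)}^{\,k} \;+\; \frac{\mu(q)}{\phi(q)}.$$
Inserting this and summing over $p\nmid q$ and $k\ge 1$: the first term contributes $\tfrac{1}{\phi(q)}\sum_{\chi\ne\chi_0}\tau(\chi)\overline{\chi(h)}\sum_p\sum_{k\ge1}\tfrac{\overline{\chi(p^k)}}{k p^{sk}} = \tfrac{1}{\phi(q)}\sum_{\chi\ne\chi_0}\tau(\chi)\overline{\chi(h)}\log L(s,\overline{\chi})$, where one uses that $\chi(p)=0$ for $p\mid q$, so the sum over $p\nmid q$ is already the full Euler sum for $\log L(s,\overline\chi)$; the second term contributes $\tfrac{\mu(q)}{\phi(q)}\sum_{p\nmid q}\sum_{k\ge1}\tfrac{1}{kp^{sk}} = \tfrac{\mu(q)}{\phi(q)}\log\zeta(s) - \tfrac{\mu(q)}{\phi(q)}\sum_{p\mid q}\sum_{k\ge1}\tfrac{1}{kp^{sk}}$, the last piece being exactly the first term of $U_{h,q}(s)$.

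Collecting the pieces, and noting that $T_{h,q}(s)$ is precisely the third term in the definition of $U_{h,q}(s)$, one obtains
$$\log L(s, \psi_{h/q}) = \frac{1}{\phi(q)}\underset{\chi\ne\chi_0}{\sum_{\chi\hskip-5pt\pmod{q}}} \tau(\chi)\,\overline{\chi(h)}\,\log L(s,\overline{\chi}) \;+\; \frac{\mu(q)}{\phi(q)}\log\zeta(s) \;+\; U_{h,q}(s).$$
It remains to exponentiate. For $\Re(s)>1$ I would fix the branches of $\log L(s,\overline{\chi})$ and $\log\zeta(s)$ given by their absolutely convergent Dirichlet series $\sum_p\sum_{k\ge1}\overline{\chi(p^k)}/(k p^{sk})$ and $\sum_p\sum_{k\ge1}1/(k p^{sk})$, which vanish as $\Re(s)\to\infty$, and define the complex powers $L(s,\overline{\chi})^{\tau(\chi)\overline{\chi(h)}/\phi(q)}$ and $\zeta(s)^{\mu(q)/\phi(q)}$ as the exponentials of the corresponding scalar multiples of these logarithms. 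With these conventions $\exp\big(\log L(s,\psi_{h/q})\big) = L(s,\psi_{h/q})$ by the Euler product~(\ref{EulerProduct}), and applying $\exp$ to the last display gives the asserted factorization. I do not anticipate a genuine obstacle here: apart from this last, purely notational, matter of branch choices, the proof is the absolutely convergent rearrangement described above.
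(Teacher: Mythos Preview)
Your proposal is correct and follows essentially the same route as the paper: start from Lemma~\ref{lemma1}, split the prime sum into $p\mid q$ and $p\nmid q$, apply Lemma~\ref{lemma2} to $e^{2\pi i h p^k/q}$ for $p\nmid q$, identify the resulting sums with $\log L(s,\overline{\chi})$ and $\log\zeta(s)$ (subtracting off the $p\mid q$ part for $\zeta$), and exponentiate. Your explicit remarks on absolute convergence and branch choices are a welcome addition but do not change the underlying argument.
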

 
 \begin{proof}
 
  If we combine lemmas (\ref{lemma1}) and (\ref{lemma2})  it follows that for $\Re(s) > 1$,
 \begin{align*} 
 \log\big( L(s, \psi_{h/q})  \big) & =  \sum_p \sum_{k=1}^\infty \frac{e^{\frac{2\pi i h p^k}{q}}}{k \, p^{sk}} \; + \; T_{h,q}(s)\\
 & = \sum_{p\,\nmid \,q} \sum_{k=1}^\infty \frac{e^{\frac{2\pi i h p^k}{q}}}{k \, p^{sk}} \; +\;\sum_{p\,\mid \,q} \sum_{k=1}^\infty \frac{e^{\frac{2\pi i h p^k}{q}}}{k \, p^{sk}}  + \; T_{h,q}(s)   \\
 & = \; \sum_{p\,\nmid \,q} \sum_{k=1}^\infty \frac{ \left(\frac{1}{\phi(q)} \underset{\chi\ne\chi_0} {\sum\limits_{\chi\hskip-5pt\pmod{q}}}  \tau(\chi)\cdot \overline{\chi(h\,p^k)}  \; + \; \frac{\mu(q)}{\phi(q)}\right)}{k \, p^{sk}} \; +\;\sum_{p\,\mid \,q} \sum_{k=1}^\infty \frac{e^{\frac{2\pi i h p^k}{q}}}{k \, p^{sk}}  + \; T_{h,q}(s). 
 \end{align*}
 
 Hence
 \begin{align*}
 \log\big( L(s, \psi_{h/q})  \big)
 & = \;\frac{1}{\phi(q)} \underset{\chi\ne\chi_0} {\sum_{\chi\hskip-5pt\pmod{q}}}  \tau(\chi) \overline{\chi(h)}\, \log(L(s, \overline{\chi}) \; + \;  \frac{\mu(q)}{\phi(q)} \log\big( \zeta(s)  \big)\nonumber\\ 
 &\hskip 40pt - \frac{\mu(q)}{\phi(q)}\sum_{p\mid q}\sum_{k=1}^\infty \frac{1}{kp^{sk}} \; + \; \sum_{p\,\mid \,q} \sum_{k=1}^\infty \frac{e^{\frac{2\pi i h p^k}{q}}}{k \, p^{sk}}  \; + \; T_{h,q}(s).
 \end{align*}
  The theorem immediately follows after taking exponentials.
 \end{proof}

  The representation of $L(s, \psi_{h/q})$ given in theorem \ref{Identity} allows one to analytically continue the function $L(s, \psi_{h/q})$ to a larger region which lies to the left of the line $\Re(s) = 1 +\varepsilon$ ($\varepsilon > 0$).  This is a region which does not include the branch points of  $L(s, \psi_{h/q})$ at the zeros and poles of $L(s,\chi), \zeta(s)$.

  \vskip 8pt 
  Assume that $q > 1$ and $\chi\hskip -3pt \pmod{q}$.  It is well known (see \cite{Dav}) that the Dirichlet L-functions $L(\sigma+it,\chi)$) do not vanish in the region
 \begin{equation} \label{ZeroFreeRegionL}
 \sigma \ge \begin{cases} 
 1 - \frac{c_1}{\log q|t|} & \text{if} \; |t| \ge 1,\\
 1 - \frac{c_2}{\log q} & \text{if} \; |t| \le 1, 
  \end{cases} \qquad\quad(\text{for absolute constants} \; c_1,c_2 >0),
 \end{equation}
 unless $\chi$ is the exceptional real character which has a simple real zero (Siegel zero) near $s = 1.$
  
  Similarly, $\zeta(\sigma+it)$ does not vanish for
 \begin{equation} \label{ZeroFreeRegionZ}
\sigma \ge 1 - \frac{c_3}{\log(|t| + 2)}, \qquad \qquad (\text{for an absolute constant} \;c_3 > 0).
\end{equation}
  
 Assume $q > 1$ and that there is no exceptional real character (mod $q$).  It follows from (\ref{ZeroFreeRegionL}) and (\ref{ZeroFreeRegionZ}) that $L(s, \psi_{h/q})$ is holomorphic in the region to the right of the contour $\mathcal C_q$ displayed in  Figure 1.

    \centerline{\hskip-101pt\includegraphics[width=65mm]{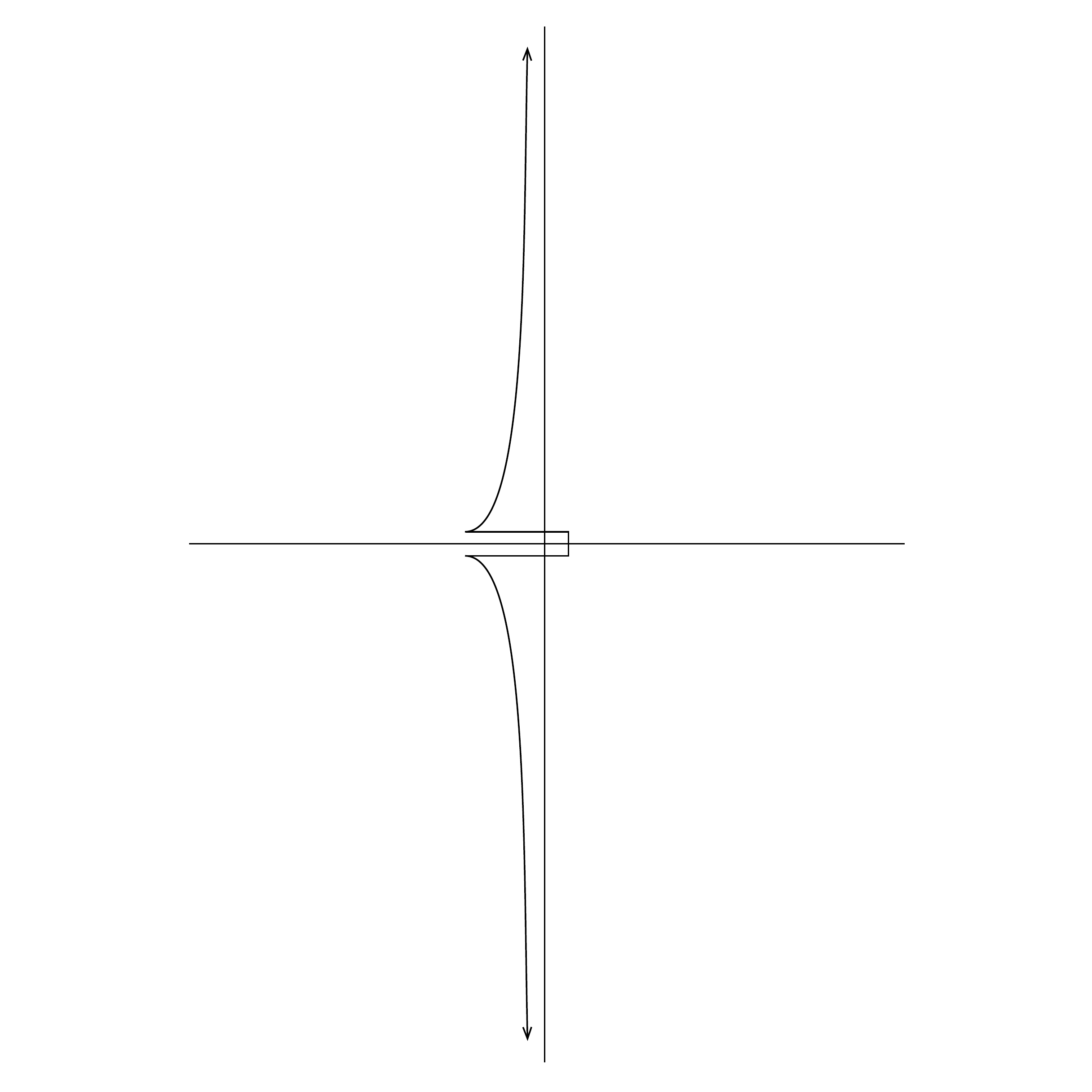}}
    
   \vskip-50pt $$\hskip -124pt\mathcal C_q$$
   \vskip 10pt
   \centerline{\hskip -100pt Figure 1}
   \vskip 20pt

  To construct the contour $\mathcal C_q$  first take a slit along the real axis from $1 - \frac{c_2}{\log q}$ to $1$ and construct a line just above and just below the slit. Then take two asymptotes to the line $\Re(s) = 1$ with the property that if $\sigma + it$ is on the asymptote and $|t| \ge 1$, then $\sigma$ satisfies (\ref{ZeroFreeRegionL}). If $q =1$, we do a similar construction using 
   (\ref{ZeroFreeRegionZ}).
  \vskip 20pt
  \section{Proof of theorem \ref{MainTheorem} }
  
  The proof of theorem \ref{MainTheorem} is based on the following theorem.

  \begin{theorem} \label{MainTheorem2} Let $h,q$ be fixed coprime integers with $q > 2$ and $\mu(q) \ne 0.$  Then for $x\to\infty$ there exist absolute constants $c,c' > 0$ such that
\begin{align*}
\sum_{n\le x} e^{2\pi i \frac{h A(n)}{q}} & = \frac{ \sin\left(\frac{\mu(q) \,\pi }{\phi(q)}   \right)}{\pi} \int\limits_{1-\frac{c }{\sqrt{\log x}}}^1 \left(\underset{\chi\ne\chi_0}{\prod_{\chi\hskip-5pt\pmod{q}}} \;L(\sigma, \,\overline{\chi})^{\frac{\tau(\chi)\overline{\chi(h)}}{\phi(q)}}\right) \cdot |\zeta(\sigma)|^{\frac{\mu(q)}{\phi(q)}}\cdot e^{H_{h,q}(\sigma)} \; \frac{x^\sigma}{\sigma} \; d\sigma\\
& \hskip 327pt+ \mathcal O\left( x e^{-c'\,\sqrt{\log x}}  \right).
\end{align*}
On the other hand if $\mu(q) = 0$, then
$ \sum\limits_{n\le x} e^{2\pi i \frac{h A(n)}{q}} =  \mathcal O\left( x e^{-c'\,\sqrt{\log x}}  \right).$

 \end{theorem}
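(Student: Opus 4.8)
The plan is to derive the asymptotic formula from the representation of $L(s,\psi_{h/q})$ in Theorem~\ref{Identity} by Perron's formula together with a contour deformation onto the region bounded by $\mathcal C_q$. First I would start from the truncated Perron formula,
$$\sum_{n\le x} e^{2\pi i\frac{hA(n)}{q}} = \frac{1}{2\pi i}\int_{1+\frac{1}{\log x}-iT}^{1+\frac{1}{\log x}+iT} L(s,\psi_{h/q})\,\frac{x^s}{s}\,ds + \mathcal O\!\left(\frac{x(\log x)^{O(1)}}{T}\right),$$
valid since $|\psi_{h/q}(n)| = 1$; one takes $T$ a suitable power of $x$ (or $e^{c\sqrt{\log x}}$, matching the final error) so that the Perron error is absorbed. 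The integrand is holomorphic to the right of $\mathcal C_q$ by the discussion following Theorem~\ref{Identity}, once we have invoked the zero-free regions \eqref{ZeroFreeRegionL}, \eqref{ZeroFreeRegionZ}; here I would fix the branch of $L(s,\psi_{h/q})$ by defining $L(s,\overline\chi)^{\tau(\chi)\overline{\chi(h)}/\phi(q)}$ and $\zeta(s)^{\mu(q)/\phi(q)}$ via $\exp$ of the logarithms that are real on the real axis to the right of~$1$, and $e^{U_{h,q}(s)}$ extends holomorphically past $\Re(s)=\tfrac12$ by Lemma~\ref{lemma1}.

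Next I would shift the contour from the vertical segment to $\mathcal C_q$: the horizontal pieces at height $\pm T$ contribute $\mathcal O(x^{1+\varepsilon}/T)$ and are harmless; the two slanted asymptotic portions of $\mathcal C_q$ with $|t|\ge 1$ contribute $\mathcal O\!\left(x\,e^{-c'\sqrt{\log x}}\right)$ because on them $\Re(s) \le 1 - c/\log(q(|t|+2))$ and $L(s,\psi_{h/q})$ grows only polynomially, a standard estimate obtained by optimizing the bound $x^\sigma \ll x\,e^{-c\log x/\log(|t|+2)}$ against the length of the contour. When $\mu(q)=0$, $\zeta(s)^{\mu(q)/\phi(q)} = \zeta(s)^0 = 1$, so the integrand is in fact holomorphic across the slit; there is no branch cut and no main term, the slit pieces collapse, and one is left only with the exponentially small contributions — this yields the second case of the theorem immediately. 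When $\mu(q)\ne 0$, the only remaining contribution comes from the two horizontal edges of the slit just above and just below the real segment from $1-\frac{c}{\sqrt{\log x}}$ to $1$ (shrinking the small circle around $s=1$ to zero is legitimate since the singularity of $\zeta(s)^{\mu(q)/\phi(q)}$ at $s=1$ is integrable, as $|\mu(q)/\phi(q)| < 1$).

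On the two edges of the slit the factors $\prod_{\chi\ne\chi_0} L(s,\overline\chi)^{\tau(\chi)\overline{\chi(h)}/\phi(q)}$ and $e^{U_{h,q}(s)}$ are continuous and take conjugate-paired boundary values, while $\zeta(\sigma\pm i0)^{\mu(q)/\phi(q)}$ for $\sigma<1$ acquires a phase: writing $\zeta(\sigma) = |\zeta(\sigma)|e^{\pm i\pi}$ just above/below the cut (since $\zeta(\sigma)<0$ there), the two edge integrals combine as
$$\frac{1}{2\pi i}\left(e^{-i\pi\mu(q)/\phi(q)} - e^{i\pi\mu(q)/\phi(q)}\right) = -\frac{\sin(\pi\mu(q)/\phi(q))}{\pi},$$
times the real integral $\int_{1-c/\sqrt{\log x}}^{1}(\cdots)|\zeta(\sigma)|^{\mu(q)/\phi(q)}e^{H_{h,q}(\sigma)}\,\frac{x^\sigma}{\sigma}\,d\sigma$, where $H_{h,q}$ is the restriction of $U_{h,q}$ (absorbing also the real logarithms of the $L$-factors, which are real on this segment) to the slit; the orientation of the contour supplies the overall sign so that the coefficient is $+\sin(\pi\mu(q)/\phi(q))/\pi$ as stated. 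That produces exactly the integral displayed in Theorem~\ref{MainTheorem2}, with error $\mathcal O(x\,e^{-c'\sqrt{\log x}})$.

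The main obstacle I expect is the bookkeeping of the multivalued factor $\zeta(s)^{\mu(q)/\phi(q)}$ near the slit and near $s=1$: one must check that the chosen branch is consistent along the whole deformation, that the small arc around $s=1$ genuinely contributes nothing in the limit (using $|\mu(q)/\phi(q)| \le 1/\phi(q) < 1$ so $x^\sigma|\zeta(\sigma)|^{\mu(q)/\phi(q)}$ is integrable up to $\sigma=1$), and that the polynomial growth bounds for $L(s,\psi_{h/q})$ on the slanted part of $\mathcal C_q$ — which come from convexity-type estimates for $\zeta$ and the $L(s,\chi)$ fed through the logarithmic identity of Theorem~\ref{Identity} — are uniform enough to make the exponential saving $e^{-c'\sqrt{\log x}}$ come out. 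Everything else is the standard Selberg–Delange machinery referenced via \cite{Sel}, \cite{Ten}.
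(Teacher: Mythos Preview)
Your approach is essentially identical to the paper's: truncated Perron formula, deformation onto the contour $\mathcal C_q$ with the slit, evaluation of the jump of $\zeta(s)^{\mu(q)/\phi(q)}$ across the cut, and estimation of the remaining pieces exactly as in the proof of the prime number theorem for arithmetic progressions. Two small corrections to your bookkeeping: the factors $\prod_{\chi\ne\chi_0}L(s,\overline\chi)^{\tau(\chi)\overline{\chi(h)}/\phi(q)}$ and $e^{U_{h,q}(s)}$ are holomorphic and hence single-valued across the slit (not ``conjugate-paired''), so they factor straight out of the bracket; and $H_{h,q}$ in the statement is simply the paper's typo for $U_{h,q}$ from Theorem~\ref{Identity}, not a new function absorbing the $L$-logarithms (which would double-count, since the $L$-product already appears explicitly in the integrand).
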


    \begin{proof}  
  \vskip 10pt
  The proof of theorem \ref{MainTheorem2} relies on the following
 lemma  taken from \cite{Dav}. 
 \vskip 10pt
 \begin{lemma} \label{DavLemma} Let
 $$\delta(x) := \begin{cases} 0, & \text{if} \; 0 < x < 1\\
 \frac12, & \text{if} \; x = 1\\
 1, & \text{if} \; x > 1, \end{cases}$$
 then for $x, T > 0$, we have
 $$  \left | \frac{1}{2\pi i} \int\limits_{c-iT}^{c+iT}\;  \frac{x^s}{s}\; ds \; - \; \delta(x)\right | \; < \; \begin{cases}
 x^c\cdot \min\left(1, \frac{1}{T |\log x| } \right), & \text{if}\; x \ne 1,\\
 cT^{-1}, & \text{if} \; x = 1.
 \end{cases}
 $$
 \end{lemma}
 
 \vskip 10pt
  It  follows from lemma \ref{DavLemma}, for $x, T \gg 1$ and $c = 1 + \frac{1}{\log x}$, that
    \begin{equation} \label{countingpsi}
    \frac{1}{2\pi i}\int\limits_{c-iT}^{c+iT} L\big(s, \psi_{h/q}\big) \;\frac{x^s}{s} \; ds = \sum_{n\le x} \psi_{h/q}(n) \; + \; \mathcal O\left( \frac{x \log x}{T}  \right)
    \end{equation}
  
  Fix large constants $c_1, c_2 > 0.$ Next, shift the integral in (\ref{countingpsi}) to the left and deform the line of integration to a contour $$L^+ \; + \; \mathcal C_{T,x}\; + \; L^-$$ as in  figure 2 below which contains 
  two short horizontal lines: $$L^{\pm} = \bigg\{\sigma \pm iT \;\, \bigg | \;\,
 1 -\frac{c_1}{\log qT} \; \le \; \sigma \; \le \; 1+\frac{1}{\log x}\bigg\},$$ together
    together with the contour $C_{T,x}$ which is similar to $C_q$ except that the two curves asymptotic to the line $\Re(s) = 1$ go from $1 -\frac{c_1}{\sqrt{\log qT}} + iT$ to $1 -\frac{c_2}{\sqrt{\log x}} +i\varepsilon$ and   $1 -\frac{c_2}{\sqrt{\log x}} - i\varepsilon$ to $1 -\frac{c_1}{\sqrt{\log qT}} - iT$, respectively, for $0 <\varepsilon \to 0. $
 
 \vskip 25pt

   \centerline{\hskip-101pt\includegraphics[width=80mm]{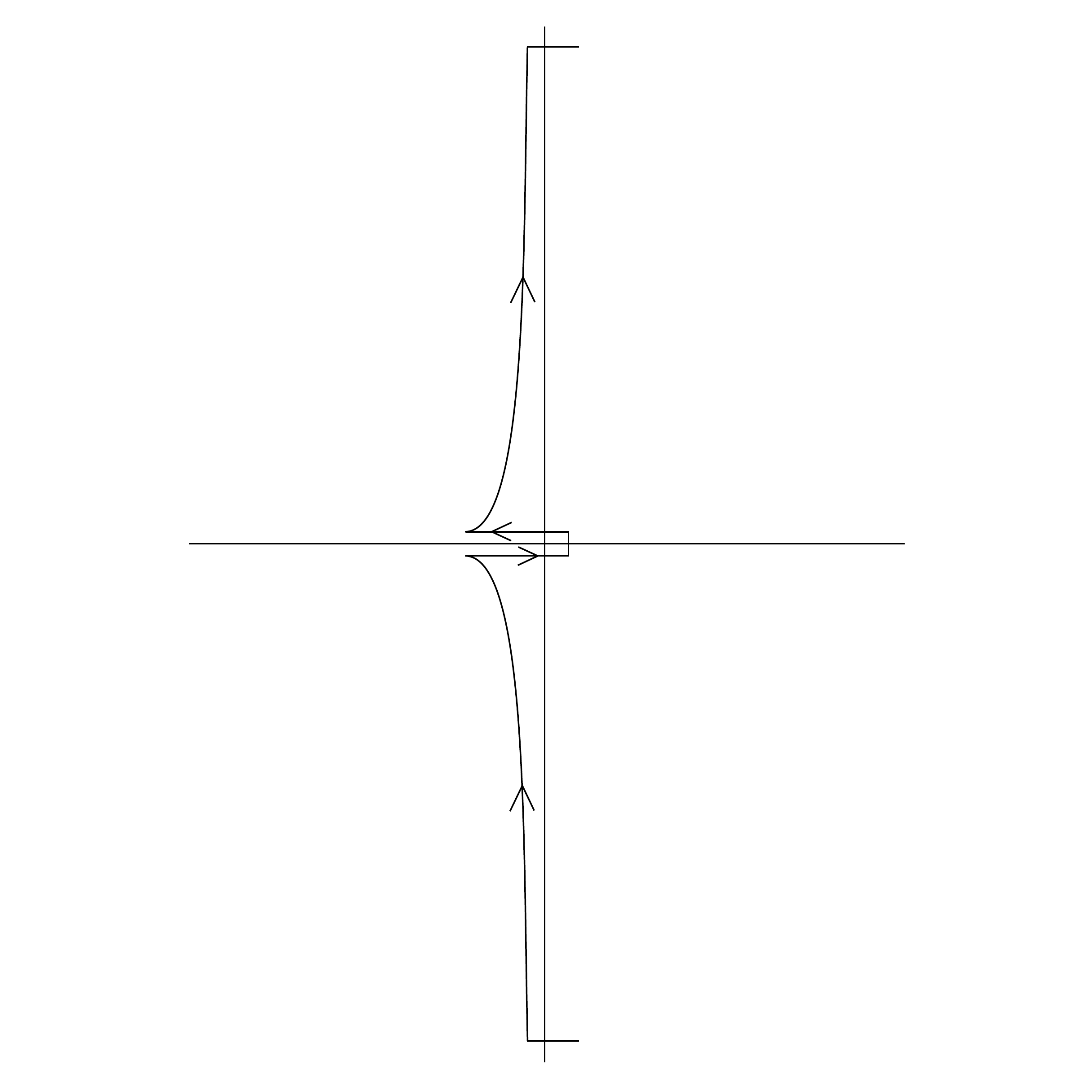}}
    \vskip -238.7pt $$\hskip -60pt L^+$$
   \vskip 132pt $$\hskip -142pt\mathcal C_{T,x}$$
  \vskip -2pt   $$\hskip -60pt L^-$$
   \vskip 12pt
   \centerline{\hskip -100pt Figure 2}
   \vskip 27pt

   Now, by the zero-free regions
 (\ref{ZeroFreeRegionL}),  (\ref{ZeroFreeRegionZ}), the region to the right of the contour 
$L^+ +  \mathcal C_{T,x} + L^-$ does not contain any branch points or poles of the L-functions $L(s, \chi)$ for any $\chi\hskip-3pt\pmod{q}$. 
    It follows that
    \begin{equation} \label{contourintegral}
    \frac{1}{2\pi i}  \int\limits_{c-iT}^{c+iT} L\big(s, \psi_{h/q}\big) \;\frac{x^s}{s} \; ds   \; = \;      \frac{1}{2\pi i}\left(\int_{L^+} + \int_{\mathcal C_\epsilon} + \int_{L^-}\right) L\big(s, \psi_{h/q}\big) \;\frac{x^s}{s} \; ds .
    \end{equation}
    
  The main contribution for the integral along  $L^+ +  \mathcal C_{T,x} + L^-$ in (\ref{contourintegral})  comes from the integrals along the straight lines above and below the slit on the real axis $\Big[1-\frac{c_2}{\sqrt{\log x}}, \;1\Big].$ These integrals cancel if the function $L\big(s, \psi_{h/q}\big)$ has no branch points or poles on the slit. It follows from theorem \ref{Identity} that this will be the case if $\mu(q) = 0$. The remaining integrals in \ref{contourintegral} can then be estimated as in the proof of the prime number theorem for arithmetic progressions (see \cite{Dav}), yielding an error term of the form  
  $ \mathcal O\left( x e^{-c'\,\sqrt{\log x}}  \right)$.  This proves the second part of theorem \ref{MainTheorem2}.
  
\vskip 10pt  
  
  Next, assume $\mu(q) \ne 0.$ In this case $L(s,\psi_{h/q})$ has a branch point at $s=1$ coming from the Riemann zeta function,  it is necessary to keep track of the change in argument. Let $0^+i$ denote the upper part of the slit and let $0^- i$ denote the lower part of the slit.  Then we have  $\log[\zeta(\sigma+0^+  i) =  \log |\zeta(\sigma)| - i\pi$ and
  $\log[\zeta(\sigma+0^- i) =  \log |\zeta(\sigma)| + i\pi$.
   
   By the standard proof of the prime number theorem for arithmetic progressions    
     it follows that (with an error  $\mathcal O\big(e^{-c'\sqrt{\log x}}   \big)$) the right hand side of (\ref{contourintegral}) is asymptotic to
\begin{equation} \label{SlitIntegral}
\mathcal I_{\text{slit}} :=\frac{-1}{2\pi i} \int\limits_{1-\frac{c}{\sqrt{\log x}}}^1 \left[\exp\Big(\log\left(L\left(\sigma + 0^+i, \; \psi_{h/q}  \right)\right)\Big) -  \exp\Big(\log\left(L\left(\sigma -0^-i, \; \psi_{h/q}  \right)\right)\Big)\right] \frac{x^\sigma}{\sigma} \; d\sigma. 
\end{equation} 
   
We may evaluate $I_{\text{slit}}$ using theorem \ref{Identity}. This gives
\begin{align*}
\mathcal I_{\text{slit}} & =  \;\frac{-1}{2\pi  i}  \int\limits_{1-\frac{c}{\sqrt{\log x}}}^1 \left(\underset{\chi\ne\chi_0}{\prod_{\chi\hskip-5pt\pmod{q}}} \;L(\sigma,\,\overline{\chi})^{\frac{\tau(\chi)\overline{\chi(h)}}{\phi(q)}}\right) \cdot e^{U_{h,q}(\sigma)}\\
&
\hskip 70pt \cdot \left[\exp\left(\frac{\mu(q)}{\phi(q)}\Big( \log|\zeta(\sigma)| -i\pi  \Big)\right)  \; - \;    \exp\left(\frac{\mu(q)}{\phi(q)}\Big( \log|\zeta(\sigma)| +i\pi  \Big)^{}    \right)\right] \frac{x^\sigma}{\sigma} \; d\sigma 
\\
& \\
& = \; \frac{\sin\left(\frac{\mu(q) \,\pi }{\phi(q)}   \right)}{\pi} \int\limits_{1-\frac{c}{\sqrt{\log x}}}^1 \left(\underset{\chi\ne\chi_0}{\prod_{\chi\hskip-5pt\pmod{q}}} \;L(\sigma, \,\overline{\chi})^{\frac{\tau(\chi)\overline{\chi(h)}}{\phi(q)}}\right) \cdot |\zeta(\sigma)|^{\frac{\mu(q)}{\phi(q)}}\cdot e^{U_{h,q}(\sigma)} \; \frac{x^\sigma}{\sigma} \; d\sigma.
\end{align*}
 \vskip 10pt  
   As in the previous case when $\mu(q) = 0,$ the remaining integrals in \ref{contourintegral} can then be estimated as in the proof of the prime number theorem for arithmetic progressions, yielding an error term of the form  
  $ \mathcal O\left( x e^{-c'\,\sqrt{\log x}}  \right)$. This completes the proof of theorem \ref{MainTheorem2}.
 \end{proof} 
 
 \vskip 10pt
 The proof of theorem \ref{MainTheorem} follows from theorem \ref{MainTheorem2} if we can obtain  an asymptotic formula for the integral
 \begin{equation} \label{SlitIntegral}
 \mathcal I_{\text{slit}}\; = \; \frac{\sin\left(\frac{\mu(q) \,\pi }{\phi(q)}   \right)}{\pi} \int\limits_{1-\frac{c}{\sqrt{\log x}}}^1 \left(\underset{\chi\ne\chi_0}{\prod_{\chi\hskip-5pt\pmod{q}}} \;L(\sigma, \,\overline{\chi})^{\frac{\tau(\chi)\overline{\chi(h)}}{\phi(q)}}\right) \cdot |\zeta(\sigma)|^{\frac{\mu(q)}{\phi(q)}}\cdot e^{U_{h,q}(\sigma)} \; \frac{x^\sigma}{\sigma} \; d\sigma.
 \end{equation}
 
 Since we have assumed $q$ is fixed, it immediately follows that for arbitrarily large $c \gg 1$ and $x \to \infty,$ we have
 $$I_{\text{slit}}\; = \; \frac{\sin\left(\frac{\mu(q) \,\pi }{\phi(q)}   \right)}{\pi}\hskip-10pt \int\limits_{1-\frac{c \log\log x}{\log x}}^1 \left(\underset{\chi\ne\chi_0}{\prod_{\chi\hskip-5pt\pmod{q}}} \;L(\sigma, \,\overline{\chi})^{\frac{\tau(\chi)\overline{\chi(h)}}{\phi(q)}}\right) \cdot |\zeta(\sigma)|^{\frac{\mu(q)}{\phi(q)}} \,\cdot \,e^{U_{h,q}(\sigma)} \; \frac{x^\sigma}{\sigma} \; d\sigma \;\, + \;\, \mathcal O\left( \frac{x}{(\log x)^c}  \right).$$
 
 \vskip 10pt
 Now, in the region $1-\frac{c \log\log x}{\log x} \le \sigma \le 1$,
 
 $$\underset{\chi\ne\chi_0}{\prod_{\chi\hskip-5pt\pmod{q}}} \;L(\sigma, \,\overline{\chi})^{\frac{\tau(\chi)\overline{\chi(h)}}{\phi(q)}}\cdot \frac{e^{H_{h,q}(\sigma)}}{\sigma} = \underset{\chi\ne\chi_0}{\prod_{\chi\hskip-5pt\pmod{q}}} \;L(1, \,\chi)^{\frac{\tau(\overline{\chi})\chi(h)}{\phi(q)}}\cdot e^{U_{h,q}(1)} \; + \; \mathcal O\left( \frac{\log\log x}{\log x}  \right).$$
 \vskip 8pt
 Consequently,
 \begin{align}
 I_{\text{slit}}\; & = \; \frac{ \sin\left(\frac{\mu(q) \,\pi }{\phi(q)}   \right)}{\pi} \underset{\chi\ne\chi_0}{\prod_{\chi\hskip-5pt\pmod{q}}} \;L(1, \,\chi)^{\frac{\tau(\overline{\chi})\chi(h)}{\phi(q)}}\cdot e^{U_{h,q}(1)} \hskip -8pt \int\limits_{1-\frac{c \log\log x}{\log x}}^1 \zeta(\sigma)^{\frac{\mu(q)}{\phi(q)}}\; x^\sigma \; d\sigma\nonumber\\
 & \hskip 140pt+ \mathcal O\left( \frac{\log\log x}{\log x}  \left|\;\;  \int\limits_{1-\frac{c \log\log x}{\log x}}^1 
 \zeta(\sigma)^{\frac{\mu(q)}{\phi(q)}}\; x^\sigma  \; d\sigma  \;\;\right|    \right).
 \label{IslitIntegral}\end{align}

 It remains to compute the integral of $|\zeta(\sigma)|^{\frac{\mu(q)}{\phi(q)}}$ occurring in  (\ref{IslitIntegral}). For $\sigma$ very close to 1, we have
 $$|\zeta(\sigma)|^{\frac{\mu(q)}{\phi(q)}} = \left(\frac{1}{|\sigma-1|} \;+\; \mathcal O(1)  \right)^{\frac{\mu(q)}{\phi(q)}} = \left( \frac{1}{|\sigma-1|}  \right)^{\frac{\mu(q)}{\phi(q)}} \; + \; \mathcal O\left( \left( \frac{1}{|\sigma-1|}  \right)^{\frac{\mu(q)}{\phi(q)}-1} \right).$$
 It follows that
 \begin{equation} \label{zetaintegral}
 \int\limits_{1-\frac{c \log\log x}{\log x}}^1 
 |\zeta(\sigma)|^{\frac{\mu(q)}{\phi(q)}}\; x^\sigma  \; d\sigma \; = \;  \Gamma\left(1 - \frac{\mu(q)}{\phi(q)}   \right) \; \frac{x}{(\log x)^{1-\frac{\mu(q)}{\phi(q)}}} \; + \; \mathcal O\left( \frac{x}{(\log x)^{2-\frac{\mu(q)}{\phi(q)}}}  \right).
 \end{equation}
 
 Combining equations  (\ref{IslitIntegral}) and (\ref{zetaintegral}) we obtain
 \begin{align*}
 I_{\text{slit}} & =\frac{\sin\left(\frac{\mu(q) \,\pi }{\phi(q)}   \right)}{\pi}\, \Gamma\left(1 - \frac{\mu(q)}{\phi(q)}   \right) \underset{\chi\ne\chi_0}{\prod_{\chi\hskip-5pt\pmod{q}}} L(1, \,\chi)^{\frac{\tau(\overline{\chi})\chi(h)}{\phi(q)}}\, e^{U_{h,q}
  (1)} \;  \frac{x}{(\log x)^{1-\frac{\mu(q)}{\phi(q)}}} \;+\; \mathcal O\left(  \frac{x}{(\log x)^{2-\frac{\mu(q)}{\phi(q)}}} \right).
 \end{align*}
 
 \vskip 10pt\noindent
 {\bf Remark:} As pointed out to me by G\' erald Tenenbaum, it is also possible to deduce  theorem 1.2 directly from theorem 2.3 by using theorem II.5.2 of [8].  In this manner one can obtain an explicit asymptotic expansion which, furthermore, is valid for values of $q$ tending to infinity with $x$. 
 
 \vskip 20pt
 \section{Examples of equidistribution (mod 3) and (mod 9)}
 \vskip 5pt\noindent
 {\bf Equidistribution (mod 3):}
   Theorem (\ref{MainTheorem})
     says that for $h = 1, \; q = 3:$
\begin{align*}
\sum_{n \, \le \, x} e^{\frac{2\pi i A(n)}{3}} \; & = \;{\frac{-V_{1,3}}{ \pi}\;\Gamma\left(\frac32\right)\underset{\chi\ne\chi_0}{\prod\limits_{\chi\hskip-5pt\pmod{3}}} \;L(1, \,\chi)^{ \frac{G(\overline{\chi})}{2}} } \frac{x}{(\log x)^{\frac32}}\;
{ \Big(1 + \mathcal O\left( \frac{1}{\log x}\right)\Big)}\\
& \approx \; (-0.503073 + 0.24042\, i) \, \frac{x}{(\log x)^{\frac32}}.\end{align*}

We computed the above sum for $x = 10^7$ and obtained
$$\sum_{n \, \le \, 10^7} e^{\frac{2\pi i A(n)}{3}} \approx -98,423.00 + 55,650.79\, i.$$
Our theorem predicts that
$$\sum_{n \, \le \, 10^7} e^{\frac{2\pi i A(n)}{3}} \; \approx \; -88,870.8 + 42,471.7 \;i.$$
 Since $\log\left(10^7\right) \approx 16.1$ is  small, this explains the discrepancy between the actual and predicted results.
\vskip 8pt
As $x \to \infty,$ we have
\begin{align*}
\underset{A(n)\; \equiv \; a \hskip-4pt\pmod{3}}{\sum_{n\, \le \, x} } \hskip-9pt 1 \; 
& = \; \frac13\sum_{h = 0}^2 \sum_{n\, \le \, x} e^{ \frac{2\pi i A(n)h}{3} } e^{-\frac{2\pi i h\,a}{3}}
\\
&  = \; \frac{x}{3} \; + c_a \; \frac{x}{(\log x)^{\frac32}} \; + \; \mathcal O\left( \frac{x}{(\log x)^{\frac52}}  \right)
\end{align*} 
where
$$c_0 = -0.335382, \qquad c_1 \; \approx \;0.306498, \qquad c_2 \; \approx \; 0.0288842.$$
 
 \vskip 8pt
 \noindent
  {\bf Equidistribution (mod 9):}
  \vskip 5pt
  Our theorem says that for $h \ne 3,6$ ($1\le h<9)$ and $q = 9$: 
 $$\sum_{n \, \le \, x} e^{\frac{2\pi i h A(n)}{9}}\; = \; \mathcal O\left(x\, e^{-c_0\sqrt{\log x}}   \right).$$

 Surprisingly!! there is a huge amount of cancellation when $x = 10^7:$

 $$  \sum_{n \, \le \, 10^7}  e^{\frac{2\pi i h A(n)}{9}}\; \approx \;
  \begin{cases} 
  -315.2 -140.4\, i & \text{if} \; h = 1,\\
  \;282.2 - 543.4\, i & \text{if} \; h = 2,\\
  \; 94.5 + 321.9 \, i  & \text{if} \; h = 4,\\
  \; 94.5 - 321.9 \, i  & \text{if} \; h = 5,\\
   \;282.2 + 543.4\, i & \text{if} \; h = 7,\\
    -315.2 +140.4\, i & \text{if} \; h = 8.\\
    \end{cases}$$

 \vskip 20pt
 \section*{Acknowledgement}
{\it The author would like to thank Ada Goldfeld for creating the figures in this paper and would also like to thank Wladyslaw Narkiewiz for pointing out the reference  \cite{Del}.}

\vskip 10pt
\end{document}